\def\dj{d\kern-0.4em\char"16\kern-0.1em}
\def\Dj{\mbox{\raise0.3ex\hbox{-}\kern-0.4em D}}
\def\be{\begin{equation}}
\def\ee{\end{equation}}
\def\bena{\begin{eqnarray*}}
\def\ena{\end{eqnarray*}}
\def\mR{\mathbb{R}}
\def\t{\tau}
\def\s{\sigma}
\def\suml{\sum\limits}
\def\dss{\displaystyle}
\newcommand{\WF}{\operatorname{WF}}
 \def\D{\mathcal{D}}
 \def\E{\mathcal{E}}
 \def\Rd{\mathbf{R}^d}
 \def\Z{\mathbf{Z}_+}
\def\N{\mathbf{N}}
\def\im{\rm{Im\, }}
\def\re{\rm{Re\, }}
\newcommand{\supp}{\operatorname{supp}}
\numberwithin{equation}{section}
\newtheorem{te}{Theorem}[section]
\newtheorem{lema}{Lemma}[section]
\newtheorem{prop}{Proposition}[section]
\newtheorem{cor}{Corollary}[section]
\theoremstyle{definition}
\newtheorem{de}{Definition}[section]
\theoremstyle{remark}
\newtheorem{rem}{Remark}[section]
\title{\textbf{Boundary values in ultradistribution spaces related to  extended Gevrey regularity}}
\author{Stevan Pilipovi\' c}
\address{Department of Mathematics and Informatics,
University of Novi Sad, Novi Sad, Serbia}
\email{stevan.pilipovic@dmi.uns.ac.rs}
\author{Nenad Teofanov}
\address{Department of Mathematics and Informatics,
University of Novi Sad, Novi Sad, Serbia}
\email{nenad.teofanov@dmi.uns.ac.rs}
\author{Filip Tomi\'c}
\address{Faculty of Technical Sciences,
University of Novi Sad, Novi Sad, Serbia}
\email{filip.tomic@uns.ac.rs}
\keywords{Ultradifferentiable functions, ultradistributions, extended Gevrey regularity, boundary values, wave front sets}
\subjclass[2000]{46F20, 46E10}
\begin{document}
\begin{abstract}
Following the well- known theory of Beurling and Roumieu ul\-tra\-di\-stri\-butions,
we investigate new spaces of ultradistributions as dual spaces of test functions which correspond to associated functions of logarithmic-type growth at infinity.
In the given framework we prove that boundary values of analytic functions with the corresponding logarithmic growth rate towards the real domain are ultradistributions. The essential condition for
that purpose, condition $(M.2)$ in the classical ultradistribution theory, is replaced by the new one, $\widetilde{(M.2)}$.
For that reason, new techniques were performed in the proofs. As an application,
we discuss the corresponding wave front sets.
\end{abstract}

\maketitle

\par

\section{Introduction}

In this paper we describe certain intermediate spaces between the space of \linebreak Schwartz distributions and any space of Gevrey ultradistributions
as boundary values of analytic functions. More precisely, we continue to investigate a new class of ultradifferentiable functions and their duals (\cite{PTT-01,
PTT-02, PTT-03, PTT-04}) following Komatsu's approach \cite{Komatsuultra1, KomatsuNotes}. We refer to \cite{BMT} and the references therein for another, equally interesting approach,

The derivatives of such ultradifferentiable functions  are controlled by the two-parameter sequences of the form  $M^{\t,\s}_p=p^{\t p^{\s}}$, $p\in \N$, $\t>0$, $\s>1$. For that reason we call them extended Gevrey functions. It turned out that such functions can be used in the study of a class of strictly hyperbolic equations and systems. In particular, the extended Gevrey class associated to the sequence $M^{1,2}_p=p^{p^2}$
is used in the analysis of the regularity of the corresponding Cauchy problem in \cite{CL}.
It captures the regularity of the coefficients in the space variable
(with low regularity in time), so that the corresponding Cauchy problem is well posed
in appropriate solution spaces.

Actually, the change of  the growth rate of  sequence $M_{p}^{\tau,\s}$
implied a change in the growth of the expression $h^p$ in classical definition (see \cite{Komatsuultra1}). Hence, instead of that expression we use
 $h^{p^\sigma}$ which essentially changes the corresponding proofs in the analysis of new ultradistribution spaces. We especially emphasize
the role of the Lambert $W$ function that  appears  in the theory of new  ultradistribution spaces for  the first time. This is the essential contribution of our approach. The properties of new ultradistribution spaces
described in terms of the Lambert function and its asymptotic properties
show that our approach is naturally included in the general theory of ultradistributions positioning the new spaces, let us call them extended Gevrey ultradistributions, between classical distributions and Komatsu type ultradistributions.

Distributions as boundary values of analytic functions are investigated in many papers, see \cite{PilipovicKnjiga} for the historical background and the relevant references therein. We point out a nice survey for distribution and ultradistribution boundary  values given in the book  \cite{H}. The essence of the existence of  a boundary value  is the  determination of the growth condition under which  an analytic function $F(x+iy)$, observed on a certain tube domain with respect to $y$, defines an (ultra)distribution as  $y$ tends to $0$.  The classical result can be roughly interpreted as follows:  if $F(x+iy)\leq C |y|^{-M}$ for some $C,M>0$ then $F(x+i 0)$ is in
the Schwartz space $\D ' (U)$ in a neighborhood $U$ of $x$. (see Theorem 3.1.15 in \cite{H}).
For Gevrey ultradistributions, sub-exponential growth rate of analytic function $F$
of the form  $|F(x+iy)|\leq C e^{k|y|^{-1/(t-1)}}$ for some $C,k>0$ and $ t>1$
implies the  boundary value result.  The function in the exponent  precisely describes the asymptotic behavior of the
associated function to the Gevrey sequence $ p!^{t},$ $ p \in \N$, cf. \cite{KomatsuNotes, Rodino}.
In general, such representations are provided if test functions admit \emph{almost analytic extensions}  in the non-quasianalytic case related to
Komatsu's condition $(M.2)$ (see \cite{Pilipovic-01}).

Different  results concerning boundary values in the spaces of ultradistributions can be found in \cite{KomatsuNotes, Komatsuultra1, PilipovicKnjiga, Pilipovic-01, Pilipovic-02}. Even now this topic for ultradistribution spaces is interesting (cf. \cite{DV, DPV, FGG, VV}). Especially, we have to mention \cite{Rainer}. At
the end of this  introduction we will briefly comment on the approach in this paper and our approach.

\par

Extended Gevrey classes $\E_{\t,\s}(U)$ and $\D_{\t,\s}(U)$, $\t>0$, $\s>1$, are introduced and investigated in \cite{PTT-01}-\cite{PTT-04},  \cite{TT0, TT}. The derivatives of functions in such classes are controlled by sequences of the form $M_p^{\t,\s}=p^{\t p^{\s}}$, $p\in \N$. Although such sequences do not satisfy Komatsu's condition $(M.2)$, the corresponding spaces consist of ultradifferentiable  functions, that is, it is possible to construct differential operators of infinite order and prove their
continuity properties on the test and dual spaces.

Our main intention in this paper is to establish the sufficient condition when the elements of  dual spaces can be represented as boundary values of analytic functions.
We follow the classical approach to boundary values given in \cite{PilipovicKnjiga} and
carry out necessary modifications in order to use it in the analysis of spaces developed in  \cite{PTT-01}-\cite{PTT-04}.
Here, for such spaces,  a plenty of non-trivial constructions are performed. In particular,
we analyze the corresponding associated functions as a main tool in our investigations.

Moreover, we apply these results in the description of related wave front sets.
The wave front set $\WF_{\t,\s}(u)$, $\t>0$, $\s>1$, of a Schwarz distribution $u$ is analyzed in \cite{ PTT-02, PTT-03, PTT-04, TT0, TT}.
In particular, it is proved that they are related to the classes $\E_{\t,\s}(U)$. We extend the definition of $\WF_{\t,\s}(u)$ to a larger space of ultradistributions by using their boundary value representations. This allows us to describe intersections and unions of $\WF_{\t,\s}(u)$ (with respect to $\t$) by using
specific functions with logarithmic type behaviour.

Let us comment on another, very interesting, concept of construction of a large class of ultradistribution spaces.
In \cite{Rainer, KMR2,RS} and several other papers the authors consider sequences of the form $k!M_k$,
where they presume a fair number of conditions on $M_k$ and discuss in details  their relations.
For example, consequences to the composition of ultradifferentiable functions determined by different classes of such sequences  are discussed.
Moreover, they consider weighted matrices, that is a family of sequences of the form $k!M_k^\lambda$, $k\in\mathbb N,$ $\lambda\in \Lambda$
(partially ordered and directed set) and make the unions, again considering various properties such as compositions and boundary values.
Their analysis follows the approach of \cite{BMT, MV}.
In essence, an old question of ultradistribution theory was the analysis of unions and intersections of ultradifferentiable
function spaces. This is very well elaborated in quoted papers.
The main reason why our classes are not covered by the quoted papers is the factor $h^{|\alpha|^{\s}}$, $\s>1$, in the seminorm \eqref{Norma}. 
For that reason our conditions on the weight sequence ($\widetilde{(M.2)'}$ and $\widetilde{(M.2)}$ below) differ from the corresponding ones in the quoted papers. As we already explained, our growth rate is not just another point of view since the basic facts used in the proofs are related to a new investigations involved by the Lambert $W$ function.
Actually, the precise estimates of our paper can be used for the further extensions in matrix approach since the original idea for our approach is quite different and based on the relation between $[n^s]!$  and $n!^s$ in the estimate of derivatives ($[n^s]$ means integer value not exceeding $n^s$, $s\in(0,1)$, cf. \cite{PTT-01,PTT-02}).

\par

The paper is organized as follows: We end the introduction with some notation.
In Section \ref{secSpaces} we introduce the necessary background on the spaces of extended Gevrey functions and their duals, spaces of ultradistributions.
Our main result Theorem \ref{GlavnaTeorema} is given in Section \ref{secGlavna}.
Wave front sets in the framework of our theory are discussed in  Section \ref{secTalasniFront}.
Finally, in Appendix we prove a technical result concerning the associated functions $T_{\t,\s,h}(k)$
and recall the basic continuity properties of ultradifferentiable operators on extended Gevrey classes,
in a certain sense analogous to  stability under the ultradifferentiable operators in the classical theory.

\subsection{Notation}\label{secNotacija}
We denote by ${\bf N}$, $\Z$, ${\bf R}$, ${\bf C}$ the sets of nonnegative
integers, positive integers, real numbers and complex numbers, respectively.
For a multi-index
$\alpha=(\alpha_1,\dots,\alpha_d)\in {\bf N}^d$, we write
$\partial^{\alpha}=\partial^{\alpha_1}\dots\partial^{\alpha_d}$, $\dss D^{\alpha}= (-i )^{|\alpha|}\partial^\alpha$, and
$|\alpha|=|\alpha_1|+\dots +|\alpha_d|$. The open ball $B_r(x_0)$ has radius $r>0$ and center at $x_0\in\Rd$;  $\dss \partial_{\overline{z}}=( \partial_{\overline{z}_1},..., \partial_{\overline{z}_n})$ where $ \partial_{\overline{z}_j} =\frac{1}{2}(\partial_{x_j}+i\partial_{y_j} )$,
$ j = 1,\dots, d$, $z=x+iy\in {\bf C} ^d$. By Hartogs's theorem, $f(z), z\in\Omega,$ $\Omega$ is open in ${\bf C} ^d$, is analytic if it is analytic with respect to every coordinate variable $z_i.$

\par

Throughout the paper we always assume  $\t>0$ and $\s>1$.

\section{Test Spaces and Duals}\label{secSpaces}

We are interested in $M_p^{\tau,\s}$, $p\in \N$, sequences of positive numbers such that   conditions $(M.1)$ and $(M.3)$ of \cite{Komatsuultra1} hold,
and instead of $(M.2)'$ and $(M.2)$ of \cite{Komatsuultra1}, for some $C>1,$ we have

$\widetilde{(M.2)}$ $M_{p+q}^{\t,\s}\leq C^{p^{\s}+q^{\s}+1}M_p^{2^{\s-1}\t ,\s}M_q^{2^{\s-1}\t,\s}$, $p,q\in \N$,

$\widetilde{(M.2)'}$ $M_{p+1}^{\t,\s}\leq C^{p^{\s}+1}M_p^{\t,\s}$, $p\in \N$.

In the sequel we consider the sequence $M_p^{\tau,\s}=p^{\t p^{\s}}$, $p\in \N$, which fulfills the above mentioned conditions
(see \cite[Lemma 2.2.]{PTT-01}). This particular choice slightly simplifies our exposition.
Clearly, by choosing $\s=1$ and $\t>1$ we recover the well known Gevrey sequence $p!^\t$.

Recall \cite{PTT-04}, the associated function related to the sequence $p^{\t p^{\s}}$, is defined by
\be \label{asociranaProduzena}
\dss T_{\t,\s,h}(k)=\sup_{p\in \N}\ln\frac{h^{p^{\s}}k^{p}}{p^{\t p^{\s}}}, \;\;\; k>0.
\ee
For $h,\s=1$ and $\t>1$, $T_{\t, 1,1}(k)$ is the associated function to the Gevrey sequence $p!^{\t}$.

In the next lemma we derive the precise asymptotic behaviour of the function $T_{\t,\s,h}$ associated with the sequence $p^{\t p^{\s}}$.
This in turn highlights  the essential difference between
$T_{\t,\s,h}$ and the associated functions determined by Gevrey type sequences.

\par

We first introduce some notation. The Lambert $W$ function is defined as the inverse function of $z e^{z}$, $z\in {\bf C}$, wherefrom 
\be
\label{osobinaLambert}
\dss x=W(x)e^{W(x)}, \quad x\geq 0.
\ee
We denote its principal (real) branch by $W(x)$, $x\geq 0$ (see \cite{LambF, HoHa}).
It is a continuous,  increasing and concave function on $[0,\infty)$, $W(0)=0$, $W(e)=1$, and $W(x)>0$, $x>0$.
It can be shown that $W$ can be represented in the form of the absolutely convergent series
$$
W(x)=\ln x-\ln (\ln x)+\sum_{k=0}^{\infty}\sum_{m=1}^{\infty}c_{km}\frac{(\ln(\ln x))^m}{(\ln x)^{k+m}},\quad x\geq x_0>e,
$$
with suitable constants $c_{km}$ and  $x_0 $. Thus  the following  estimates hold:
\be
\label{sharpestimateLambert}
\ln x -\ln(\ln x)\leq W(x)\leq \ln x-\frac{1}{2}\ln (\ln x), \quad x\geq e,
\ee
with the equality in \eqref{sharpestimateLambert} if and only if $x=e$.

\par

For given $\s>1$, $\t,h>0$, let
$$\dss {\mathfrak R}(h,\cdot):=h^{-\frac{\s-1}{\t}}e^{\frac{\s-1}{\s}}\frac{\s-1}{\t \s}\ln k
= h^{-\frac{\s}{\t \s '}}e^{\frac{1}{\s'}}\frac{1}{\t \s'}\ln k,\quad k>e, $$
where
$$
\frac{1}{\s} + \frac{1}{\s '} = 1, \;\;\; \text{i.e.} \;\;\; \s' = \frac{\s}{\s -1}.
$$

\par

\begin{lema}
\label{OcenazaAsociranu}
Let $h>0$, and let $T_{\t,\s,h}$ be given by \eqref{asociranaProduzena}. Then there
exists constants
$B_1,B_2,b_1,b_2>0$
such that
$$
B_1\, k^{b_1\Big(\frac{\ln k}{\ln (\ln  k) }\Big)^{\frac{1}{\s-1}}}\leq
\exp\{T_{\t,\s,h}(k)\}\leq
B_2 \,k^{ b_2 \Big( \frac{\ln k}{\ln (\ln  k) }\Big)^{\frac{1}{\s-1}}}, \quad k>e.
$$
More precisely, if
$$ \dss c_1=\Big(\frac{\s-1}{\t \s}\Big)^{\frac{1}{\s-1}},\;\;\;
\text{and} \;\;\; c_2=h^{-\frac{\s-1}{\t}}e^{\frac{\s-1}{\s}}\frac{\s-1}{\t \s},
$$
then, there exist constants $A_1, A_2>0$ such that
$$
A_1\, k^{\frac{1}{2}\frac{\s-1}{ \s}c_1\Big( \frac{\ln k}{\ln ( c_2 \ln  k) }\Big)^{\frac{1}{\s-1}}}\leq
\exp\{T_{\t,\s,h}(k)\}\leq
A_2 \,k^{ c_1 \Big( \frac{\ln k}{\ln (c_2 \ln  k) }\Big)^{\frac{1}{\s-1}}}, \quad k>e.$$
\end{lema}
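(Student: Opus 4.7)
The strategy is to analyze the continuous extension $\phi(p)=p\ln k+p^{\s}\ln h-\t p^{\s}\ln p$ of the function whose supremum over $p\in\N$ defines $T_{\t,\s,h}(k)$. The upper bound follows from $T_{\t,\s,h}(k)\leq \sup_{p>0}\phi(p)$, while the lower bound is obtained by evaluating $\phi$ at a suitable integer close to the continuous maximizer; the discreteness of $\N$ contributes only a constant factor that gets absorbed into $A_1,A_2,B_1,B_2$.

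First I would locate the unique maximizer $p^{*}(k)>0$ by setting $\phi'(p)=\ln k-p^{\s-1}[\t\s\ln p+\t-\s\ln h]=0$; concavity of $\phi$ near $p^{*}$ (for $k$ large) makes $p^{*}$ well-defined. Taking logarithms of the implicit equation gives $(\s-1)\ln p+\ln\ln p=\ln\ln k-\ln(\t\s)+O(1/\ln p)$, so $\ln p^{*}\sim\ln\ln k/(\s-1)$, and substituting back,
\[
p^{*}(k)\sim \Bigl(\tfrac{(\s-1)\ln k}{\t\s\ln\ln k}\Bigr)^{1/(\s-1)}=c_{1}\Bigl(\tfrac{\ln k}{\ln\ln k}\Bigr)^{1/(\s-1)}.
\]
The specific choice of $c_{2}$ is engineered so that the sub-leading constants from $\ln(\t\s)$, $\ln h$, and the factor $e^{(\s-1)/\s}$ coming from the next correction in the iteration of the critical equation are absorbed into a single logarithm $\ln(c_{2}\ln k)$, yielding the closed form stated in the lemma, valid uniformly for $k>e$.

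Next, substituting the critical relation $p^{*}\ln k=p^{*\s}[\t\s\ln p^{*}+\t-\s\ln h]$ into $\phi(p^{*})$ eliminates the linear-in-$k$ term and yields
\[
\phi(p^{*})=p^{*\s}\bigl[\t(\s-1)\ln p^{*}+\t-(\s-1)\ln h\bigr]\sim \tfrac{\s-1}{\s}\,p^{*}\,\ln k.
\]
Combining with the asymptotic of $p^{*}(k)$ and writing $\exp(T_{\t,\s,h}(k))=k^{T_{\t,\s,h}(k)/\ln k}$ gives the upper bound with coefficient $c_{1}$; the extra factor $\s/(\s-1)$ relative to the leading-order $\tfrac{\s-1}{\s}c_{1}$ furnishes the slack absorbing the error terms in the asymptotic inversion and in passing from $\ln\ln k$ to $\ln(c_{2}\ln k)$. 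For the lower bound I would evaluate $\phi$ at $p_{0}=\lfloor\alpha p^{*}(k)\rfloor$ for a fixed $\alpha\in(0,1)$, estimating the cost of dropping $\alpha$ below $1$; the factor $\tfrac{1}{2}\tfrac{\s-1}{\s}$ in the lower-bound exponent records this suboptimal choice together with the loss from discretization. The main technical obstacle is not conceptual but the careful bookkeeping required to extract the precise $\ln(c_{2}\ln k)$ form from the transcendental critical equation and to verify both inequalities uniformly in $k>e$ with the explicit $c_{1},c_{2}$ given.
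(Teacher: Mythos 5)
Your critical-point analysis is correct: the stationarity equation $\ln k=p^{\sigma-1}[\tau\sigma\ln p+\tau-\sigma\ln h]$, the resulting identity $\phi(p^{*})=p^{*\sigma}[\tau(\sigma-1)\ln p^{*}+\tau-(\sigma-1)\ln h]\sim\frac{\sigma-1}{\sigma}p^{*}\ln k$, the identification of $c_{1}$ and $c_{2}$ from one iteration of the transcendental equation, and the observation that the factors $\sigma/(\sigma-1)$ and $1/2$ supply exactly the slack needed for the two-sided bounds all check out. The paper itself gives no details, deferring entirely to the proof of \cite[Theorem 2.1]{PTT-04}, which proceeds by precisely this Laplace-type evaluation of the supremum, so your route coincides with the intended one.
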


\begin{proof}
Lemma \ref{OcenazaAsociranu} can be proved by following the arguments used in the proof of \cite[Theorem 2.1]{PTT-04}.
There it is shown that for given
$h>0$, $\t>0$ and $\s>1$  the following inequalities hold:
\begin{multline}
\label{nejednakostzaTeoremu1}
\tilde{A}_{\t,\s,h}\exp \Big\{ \Big (
\frac{{\ln} k }{(2^{\s-1 }\t W ({\mathfrak R}(h,k)) )^{\frac{1}{\s} } \s' } \, \,
\Big )^{\s'} \Big\}
\leq e^{T_{\t,\s,h}(k)}\\ \leq
 A_{\t,\s,h}\exp\Big\{ \Big (
 \frac{{\ln}k }{(\t  \s' W({\mathfrak R}(h,k)))^{\frac{1}{\s}}}\,  \,
 \Big )^{\s'}
 \Big\}, \quad k>e,
\end{multline} for some $A_{\t,\s,h}, \tilde{A}_{\t,\s,h}>0$.
Moreover, in the view of \eqref{sharpestimateLambert}, it follows that
\begin{equation}
\label{asimptotskaocena}
 {W^{-\frac{\s'}{\s}}({\mathfrak R}(h,k))}\,({\ln}k)^{\s'}
\asymp
\Big(  \frac{\ln k}{\ln (C_h \ln k)}\Big)^{\frac{\s'}{\s}}\ln k,\quad k\to \infty,
\end{equation} 
with $\dss C_h:= h^{-\frac{\s-1}{\t}}e^{\frac{\s-1}{\s}}\frac{\s-1}{\t \s} $
$ = h^{-\frac{\s}{\t \s'}}e^{\frac{1}{\s'}} (\t \s')^{-1}.$ 

Details are left for the reader.
\end{proof}

We define (following the classical approach \cite{Komatsuultra1}):
\be \label{Tstar}
\dss T^{*}_{\t,\s,h}(k)=\sup_{p\in \N}\ln\frac{h^{p^{\s}}k^{p}}{p^{p (\t p^{\s-1}-1)}}, \;\;\; k>0.
\ee
It turns out that $T^{*}_{\t,\s,h}(k)$ enjoys the same asymptotic behaviour as  $T_{\t,\s,h}$, cf. Lemma \ref{NejednakostLema2} a) in the Appendix.
This is another difference between our approach and the  classical ultradistribution theory, where $T^*$ plays an important role.

\par

Next we recall  the definition of spaces $\E_{\t,\s}(U)$ and $\D_{\t,\s}(U)$, where $U$ is an open set in $\Rd$ (\cite{PTT-01}).

Let $K\subset \subset \Rd$ be a regular compact set. Then, ${\E}_{\t, {\s},h}(K)$
is
the Banach space of  functions $\phi \in  C^{\infty}(K)$ such that
\begin{equation} \label{Norma}
\| \phi \|_{{\E}_{\t, {\s},h}(K)}=\sup_{\alpha \in \N^d}\sup_{x\in K}
\frac{|\partial^{\alpha} \phi (x)|}{h^{|\alpha|^{\s}}  |\alpha|^{\t |\alpha|^{\s}} }<\infty.
\end{equation}
We have
$$ \displaystyle
{\E}_{\t_1, {\s_1},h_1}(K)\hookrightarrow {\E}_{\t_2,
{\s_2},h_2}(K), \;\;\;
0<h_1< h_2, \; 0<\t_1<\t_2, \; 1<\s_1< \s_2,
$$
where $\hookrightarrow$ denotes the strict and dense inclusion.

The set of functions from ${\E}_{\t,
\s,h}(K)$ supported by $K$ is denoted by  ${\D}^K_{\t, \s,h}$ .
Next,
\begin{equation}
\label{NewClassesInd} {\E}_{\{\t,
\s\}}(U)=\varprojlim_{K\subset\subset U}\varinjlim_{h\to
\infty}{\E}_{\t, {\s},h}(K),
\end{equation}
\begin{equation}
\label{NewClassesProj} {\E}_{(\t,
\s)}(U)=\varprojlim_{K\subset\subset U}\varprojlim_{h\to 0}{\E}_{\t,
{\s},h}(K),
\end{equation}
\begin{equation}
\label{NewClassesInd2} {\D}_{\{\t,
\s\}}(U)=\varinjlim_{K\subset\subset U} {\D}^K_{\{\t, \s\}}
=\varinjlim_{K\subset\subset U} (\varinjlim_{h\to\infty}{\D}^K_{\t,
\s,h})\,,
\end{equation}
\begin{equation}
\label{NewClassesProj2} {\D}_{(\t,
\s)}(U)=\varinjlim_{K\subset\subset U} {\D}^K_{(\t, \s)}
=\varinjlim_{K\subset\subset U} (\varprojlim_{h\to 0}{\D}^K_{\t,
\s,h}).
\end{equation}
Spaces in \eqref{NewClassesInd} and \eqref{NewClassesInd2} are called \emph{Roumieu} type spaces while \eqref{NewClassesProj} and \eqref{NewClassesProj2} are \emph{Beurling} type spaces.
Note that all the spaces of ultradifferentiable functions defined by Gevrey type sequences are contained in the corresponding spaces defined above.

For the corresponding spaces of ultradistributions we have:
\begin{equation*}
\label{NewClassesIndDist}
{\D'}_{\{\t,
\s\}}(U)=\varprojlim_{K\subset\subset U}\varprojlim_{h\to 0} (\D^{K} _{\t, {\s},h})',
\quad {\D'}_{(\t, \s)}(U)=\varprojlim_{K\subset\subset U}\varinjlim_{h\to \infty}
(\D ^{K}_{\t,{\s},h})'.
\end{equation*}
Topological properties of all those spaces  are the same as in the case of Beurling and Roumieu type spaces given in \cite{Komatsuultra1}.

We will use abbreviated notation $ \t,\s $ for $\{\t,\s\}$ or $(\t,\s)$.
Clearly,
$$
\dss \D' (U)\hookrightarrow \D'_{\t,\s} (U)\hookrightarrow \varprojlim_{t\to 1} \D'_{t}(U),
$$
where $\D'_{t}(U)=\D'_{t,1}(U)$ denotes the space of Gevrey ultradistributions with index $t>1$.
More precisely, if (for $\s>1$) we put
\begin{equation*}
{\D}^{(\s)}(U)=\varprojlim_{\t\to 0} {\D}_{\t,\s}(U),\quad
\text{and}
\quad {\D}^{\{\s\}}(U) =\varinjlim_{\t\to \infty} {\D}_{\t,\s}(U),
\end{equation*}
then
$$\dss \D '(U)\hookrightarrow {\D}^{'\{\s\}}(U)\hookrightarrow {\D}^{'(\s)}(U)\hookrightarrow \varprojlim_{t\to 1} \D'_{t}(U),$$ where $ {\D}^{'(\s)}(U)$ and   ${\D}^{'\{\s\}}(U)$ are dual spaces of ${\D}^{(\s)}(U)$ and ${\D}^{\{\s\}}(U)$,
respectively.

Thus we are dealing with intermediate spaces
between the space of Schwartz distributions and spaces of Gevrey ultradistributions.
In the next section we show the boundary value result in the given framework.
This, however asks for  the use of new techniques.

\section{Main result}\label{secGlavna}

The condition $(M.2)$ (also known as the stability under the ultradifferentiable operators), essential for the boundary value theorems in the framework of ultradistribution spaces \cite{Pilipovic-01, Komatsuultra1}, is in our approach replaced by the condition $\widetilde{(M.2)}$.
 We note that in \cite{Rainer} a more general condition than  $(M.2)$ is considered.
In the case of the sequence $M_p^{\tau,\s}=p^{\t p^{\s}}$, $p\in \N$, the asymptotic behaviour given in Lemma \ref{OcenazaAsociranu}
is essentially used to prove our main result  as follows.

\begin{te}
\label{GlavnaTeorema}
Let $\s>1$, $U$ be an open set in $\Rd$, $\Gamma$ an open cone in $\Rd$ and  $\gamma >0$. Assume that $F(z)$, $ z \in Z $  is an analytic function, where
$$ Z=\{z\in {\mathbf C}^d\,|\, {\re} {z}\in U\,, {\im} z\in \Gamma, |{\im} z|<\gamma\},$$
and such that
\be
\label{uslovVelikoFPosledica}
|F(z)|\leq A |y|^{ - H \Big( \frac{\ln (1/|y|)}{\ln ( \ln (1/|y|)) }\Big)^{\frac{1}{\s-1}}},\quad z=x+iy\in Z,\nonumber
\ee
for some $A,H>0$ (resp. for every $H>0$ there exists $A>0$). Then
\be
\label{LimitGlavnaTeorema}
\dss F(x+iy)\to F(x+i 0), \quad y\to 0,\, y\in \Gamma,
\ee
in $ {\D}^{'(\s)}(U)$ (resp. $ {\D}^{'\{\s\}}(U)$).

More precisely, if
\be
\label{uslovVelikoF}
|F(z)|\leq A \exp\{T_{(2^{\s}-1)\t,\s,H}(1/|y|)\}\quad z=x+iy\in Z,
\ee for some $A,H>0$ (resp. for every $H>0$ there exists $A>0$) then
  \eqref{LimitGlavnaTeorema} holds in ${\D'}_{(\t/2^{\s-1},
\s)}(U)$ (resp. ${\D'}_{\{\t/2^{\s-1},
\s\}}(U)$).
\end{te}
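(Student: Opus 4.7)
The plan is to adapt the classical almost-analytic extension method of Komatsu to the extended Gevrey setting, where $(M.2)$ is replaced by $\widetilde{(M.2)}$ and this substitution produces the index shift $\t\mapsto 2^{\s-1}\t$ visible in the statement. By Lemma \ref{OcenazaAsociranu} the two hypotheses \eqref{uslovVelikoFPosledica} and \eqref{uslovVelikoF} are equivalent up to relabeling of $A$ and $H$, so it suffices to prove the refined conclusion in $\D'_{\t/2^{\s-1},\s}(U)$; the weaker conclusion in $\D^{'(\s)}(U)$ or $\D^{'\{\s\}}(U)$ then follows from the projective or inductive structure in $\t$.

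Fix a regular compact $K\subset\subset U$ and a test function $\phi\in\D^K_{\t/2^{\s-1},\s}$. The technical core is the construction of an almost-analytic extension $\tilde\phi$ of $\phi$ to a complex neighborhood of $K$ by a truncated Borel-type series
\[
\tilde\phi(x+iy) = \sum_{\alpha\in\N^d}\frac{(iy)^\alpha}{\alpha!}\,\partial^\alpha\phi(x)\,\chi\!\left(r_{|\alpha|}|y|\right),
\]
where $\chi\in C_c^\infty(\mR)$ equals $1$ near $0$ and the cut-off radii $r_p$ are tuned so that $r_p|y|$ stays bounded exactly when $|y|$ is comparable to $1/M_p^{\t/2^{\s-1},\s}$. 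A direct computation shows that $\partial_{\bar z}\tilde\phi$ consists of telescoping terms in which $\chi'$ appears, hence vanishes on $\Rd$ and is confined to a thin annular zone. Using $\widetilde{(M.2)}$ with $q=1$ to convert $M_{|\alpha|+1}^{\t/2^{\s-1},\s}$-type factors into $M_{|\alpha|}^{\t,\s}$-type ones, together with the function $T^{*}_{\t,\s,h}$ from \eqref{Tstar} and Lemma \ref{NejednakostLema2}(a) to absorb $1/\alpha!$, one arrives at
\[
|\partial_{\bar z}\tilde\phi(x+iy)| \leq C\,\exp\{-T_{\t,\s,h_0}(1/|y|)\},
\]
valid for $|y|$ small, with $h_0$ arbitrarily large in the Beurling case and some $h_0>0$ existing in the Roumieu case.

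Given such $\tilde\phi$, fix $y^0\in\Gamma$ with $|y^0|<\gamma$. For $0<\varepsilon<\varepsilon'<1$, Stokes' theorem applied to $F\tilde\phi$ on the slab $S_{\varepsilon,\varepsilon'}=\{x+i\lambda y^0:x\in\Rd,\ \varepsilon\leq\lambda\leq\varepsilon'\}$ gives, using analyticity of $F$,
\[
\int_{\Rd}\!F(x+i\varepsilon' y^0)\tilde\phi(x+i\varepsilon' y^0)\,dx - \int_{\Rd}\!F(x+i\varepsilon y^0)\tilde\phi(x+i\varepsilon y^0)\,dx = c_0\!\int_{S_{\varepsilon,\varepsilon'}}\!F(z)\,\partial_{\bar z}\tilde\phi(z)\,dV(z),
\]
for a universal constant $c_0$. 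The integrand is bounded by $AC\exp\{T_{(2^\s-1)\t,\s,H}(1/|y|)-T_{\t,\s,h_0}(1/|y|)\}$, and since $(2^\s-1)\t>\t$ the decreasing dependence of $T_{\t,\s,h}$ on $\t$ coming from $c_1(\t)$ in Lemma \ref{OcenazaAsociranu} forces this difference to tend to $-\infty$ as $|y|\to 0$, so the right-hand side vanishes as $\varepsilon,\varepsilon'\to 0$. Since $\tilde\phi|_{\Rd}=\phi$, a Taylor estimate on $\tilde\phi(x+iy)-\phi(x)$ combined with the growth of $F$ bounds $\int F(x+iy)[\tilde\phi(x+iy)-\phi(x)]\,dx$ by a similar vanishing integral. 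Therefore $\lambda\mapsto\int F(x+i\lambda y^0)\phi(x)\,dx$ is Cauchy as $\lambda\to 0^+$, and its limit $\langle F(\cdot+i0),\phi\rangle$ defines a continuous linear functional on $\D^K_{\t/2^{\s-1},\s}$, hence an element of $\D'_{\t/2^{\s-1},\s}(U)$.

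The main obstacle is the construction and precise estimation of $\tilde\phi$: neither $T_{\t,\s,h}$ nor $T^{*}_{\t,\s,h}$ has a closed form, and the replacement of $(M.2)$ by $\widetilde{(M.2)}$ forces careful bookkeeping of the factor $C^{p^\s+q^\s+1}$ and of the index doubling $\t\mapsto 2^{\s-1}\t$ throughout the Borel series. The asymptotic shape $\exp\{T_{\t,\s,h}(k)\}\sim k^{c_1(\ln k/\ln\ln k)^{1/(\s-1)}}$ from Lemma \ref{OcenazaAsociranu} is essential both for the summability of the series in a usable neighborhood of $\Rd$ and for reconciling the two associated functions that appear in the Stokes' theorem step; once the decay of $\partial_{\bar z}\tilde\phi$ is in place, the remainder is standard.
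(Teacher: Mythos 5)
Your overall architecture (almost-analytic extension followed by Stokes' formula) is the same as the paper's, but the proposal has a genuine gap at its center: the estimate $|\partial_{\bar z}\tilde\phi(x+iy)|\leq C\exp\{-T_{\t,\s,h_0}(1/|y|)\}$ is asserted, not derived, and this estimate carries essentially all of the difficulty of the theorem. Two things go wrong there. First, the claimed decay index $\t$ does not match what the construction can deliver for $\phi\in\D^K_{\t/2^{\s-1},\s}$: each application of $\widetilde{(M.2)}$ inflates the index by the factor $2^{\s-1}$, and the paper's computation (the decomposition into $S_1,S_2,S_3$ and the control of $I_{1,\beta}$, $I_{2,\beta}$) lands on the weaker decay $\exp\{-T_{(2^{\s}-1)\t,\s,h}(1/|tY|)\}$ --- calibrated precisely to cancel the growth $\exp\{T_{(2^{\s}-1)\t,\s,H}(1/|y|)\}$ of $F$ after matching $h$ with $H$ via Lemma \ref{NejednakostLema2}(a). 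Your stronger claim would improve the theorem and cannot simply be announced. Second, and relatedly, the paper does not use the classical Borel series $\sum (iy)^\alpha\partial^\alpha\phi(x)/\alpha!$: it normalizes by $|\alpha|^{\t|\alpha|}$ instead of $\alpha!$ and takes cut-off scales $m_p=p^{\t((2p)^{\s-1}-1)}$, tuned so that $|\alpha|^{\t|\alpha|}m_{|\alpha|}^{|\alpha|}=|\alpha|^{2^{\s-1}\t|\alpha|^{\s}}$; this is exactly what makes the sums over $|\alpha|\leq|\beta|$ and $|\alpha|>|\beta|$ close under $\widetilde{(M.2)}$ and \eqref{SimpleInequality}. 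With $\alpha!$ in the denominator you must trade $p!$ against $p^{\t p^{\s}}$ through $T^{*}_{\t,\s,h}$, and you have not shown that this bookkeeping closes with the index you claim.

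Two further points. Your argument that the Stokes correction term vanishes because ``$(2^{\s}-1)\t>\t$ forces the difference of associated functions to $-\infty$'' does not follow from Lemma \ref{OcenazaAsociranu}: the lower-bound exponent constant $\frac{1}{2}\frac{\s-1}{\s}c_1(\t)$ need not exceed the upper-bound constant $c_1((2^{\s}-1)\t)$ (for $\s=2$ one gets $1/4<1/3$), so the two-sided asymptotics alone do not give divergence to $-\infty$; fortunately mere boundedness of $F\,\partial_{\bar z}\tilde\phi$ suffices for your slab integral to vanish, and boundedness is what the paper actually proves by matching the indices exactly and using monotonicity in $h$. Finally, the step bounding $\int F(x+iy)\,[\tilde\phi(x+iy)-\phi(x)]\,dx$ ``by a Taylor estimate'' fails as stated: $\tilde\phi(x+iy)-\phi(x)=O(|y|)$ and no better, while $|F(x+iy)|$ may grow like $\exp\{T_{(2^{\s}-1)\t,\s,H}(1/|y|)\}$, which dominates $1/|y|$, so the product need not even be bounded. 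The paper sidesteps this by defining $\langle F(\cdot+i0),\varphi\rangle$ directly through the Stokes identity \eqref{StoksovaFormula} and establishing the continuity bound $|\langle F(\cdot+i0),\varphi\rangle|\leq B'_h\|\varphi\|_{\E_{\t/2^{\s-1},\s,h}}$, rather than comparing $\int F(x+i\lambda Y)\tilde\phi\,dx$ with $\int F(x+i\lambda Y)\phi\,dx$ at each height.
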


\begin{proof}
Let $K\subset\subset U$ and $\varphi\in \D^K_{\t/2^{\s-1},\s}$. Moreover, let $\kappa\in \D_{\t/2^{\s-1},\s} (\Rd)$ be such that $\supp\kappa\subseteq \overline{B(0,2)}$, $\kappa=1$ on $B(0,1)$.

In the sequel we denote $\displaystyle m_p=p^{\t ((2p)^{\s-1}-1)}$, $p\in \N$. Clearly, $m_p$ is an increasing sequence and $m_p\to \infty$ as $p\to \infty$.

Fix $h>0$, and let
\be
\label{KapaFunkcije}
\kappa_\alpha (y)=\kappa (4 h m_{|\alpha|} y),\quad \alpha\in \N^d,\,y\in \Rd.\nonumber
\ee
 Note that
\be
\label{NosacKappa}
\supp \kappa_\alpha\subseteq \{y\in \Rd\,|\,|y|\leq 1/(2 h m_{|\alpha|} )\},
\ee and for $j=1,\dots, d$,
\be
\label{NosacKappaIzvod}
\supp \partial_{y_j}\kappa_\alpha\subseteq \{y\in \Rd\,|\, 1/(4 h m_{|\alpha|})\leq |y| \leq  1/(2 h m_{|\alpha|}) \},\quad \alpha\in \N^d.
\ee

Let
\be
\label{AlmostAnalyticExtension}
\Phi (z)=\sum_{\alpha\in \N^d}\frac{\partial^{\alpha}\varphi (x)}{|\alpha|^{\t |\alpha|}}(i y)^{\alpha}\kappa_\alpha (y),\quad z=x+iy\in {\mathbf C}^d.
\ee
Clearly, $\Phi$ is a smooth function in $\mR^{2d}$ and $\Phi(x)=\varphi (x)$ for $x\in K$.

Fix $Y=(Y_1,\dots, Y_d)\in \Gamma$, $Y\not=0$, $|Y|<\gamma$,  and set
\be
\label{ZY}
\dss Z_Y=\{x+i t Y\,|\, x\in K,\, t\in (0,1]\}.
\ee

In order to use Stoke's formula (see \cite{Pilipovic-01}) we need
to estimate $\Phi$ and its derivatives on $Z_Y$. To that end we had to adjust the standard technique in a nontrivial manner.

Let us show that there exists $A_h>0$ such that
\be
\label{prvaocenaPhi}
|\Phi(z)|\leq A_h \|\varphi\|_{\E_{\t/2^{\s-1},\s,h}},\quad h>0,\,\,  z\in Z_Y.
\ee  Note that \eqref{NosacKappa} implies
$$ |tY|^{|\alpha|}|\kappa_\alpha (tY)|\leq \frac{1}{(2 h m_{|\alpha|})^{|\alpha|}}=\frac{|\alpha|^{\t |\alpha|}}{(2h)^{|\alpha|} |\alpha|^{{2^{\s-1}\t |\alpha|^{\s}}}},\quad t\in (0,1],\, \alpha \in \N^d,$$
and therefore we obtain
\begin{multline}
\label{ocenaPHI}
|\Phi (z)|\leq \sum_{\alpha\in \N^d}\frac{|\partial^{\alpha}\varphi (x)|}{|\alpha|^{\t |\alpha|}}|tY|^{\alpha}|\kappa_\alpha (tY)|\leq \sum_{\alpha\in \N^d}\frac{|\partial^{\alpha}\varphi (x)|}{(2h)^{|\alpha|} |\alpha|^{{2^{\s-1}\t |\alpha|^{\s}}}}\\
 \leq \|\varphi\|_{\E_{\t/2^{\s-1},\s,h}}  \sum_{\alpha\in \N^d}\frac{h^{|\alpha|^{\s}}|\alpha|^{(\t/2^{\s-1}) |\alpha|^{\s}}}{(2h)^{|\alpha|} |\alpha|^{{2^{\s-1}\t |\alpha|^{\s}}}} = A_h \|\varphi\|_{\E_{\t/2^{\s-1},\s,h}},\nonumber
\end{multline} where $\dss A_h =\sum_{\alpha\in \N^d}\frac{h^{|\alpha|^{\s}-|\alpha|}}{2^{|\alpha|}|\alpha|^{\t_0 |\alpha|^{\s}}}<\infty$ for $\t_0=\t (2^{\s-1} - \frac{1}{2^{\s-1}})>0$. Hence \eqref{prvaocenaPhi} follows.

Next we estimate  $\partial_{\overline{z_j}}\Phi(z)$, $j\in \{1,\dots, d\}$,
when $z\in Z_Y$. More precisely, we show that  for a given $ h>0,$ there exists  $B_h>0$ such that
\be
\label{AnaliticExt}
|\partial_{\overline{z_j}}\Phi(z)|\leq B_h \|\varphi\|_{\E_{\t/2^{\s-1},\s,h}} \exp\{-T_{(2^{\s}-1)\t,\s,h}(1/|tY|)\},\quad\, z\in Z_Y.
\ee
By \eqref{NosacKappa} and \eqref{NosacKappaIzvod} it is sufficient to prove \eqref{AnaliticExt} for
\be
\label{NosacVelikoY}
1/(4 h m_{|\alpha|})\leq |tY| \leq  1/(2 h m_{|\alpha|}),\quad   0<t\leq 1,\,\alpha\in \N^d.
\ee

\par

Note that for $z\in Z_Y$ we have
\begin{multline}
\partial_{\overline{z_j}}\Phi(z)=\frac{1}{2}\Big(\sum_{\alpha\in \N^d}\frac{\partial^{\alpha+e_j}\varphi (x)}{|\alpha|^{\t |\alpha|}}i^{|\alpha|}t^{|\alpha|} Y^{\alpha}\kappa_\alpha (tY) \\
+ \sum_{\alpha\in \N^d}\frac{\partial^{\alpha}\varphi (x)}{|\alpha|^{\t |\alpha|}}\alpha_j i^{|\alpha|+1}t^{|\alpha|} Y^{\alpha-e_j}\kappa_\alpha (tY)\\
+ \sum_{\alpha\in \N^d}\frac{\partial^{\alpha}\varphi (x)}{|\alpha|^{\t |\alpha|}}i^{|\alpha|+1}t^{|\alpha|+1}Y^{\alpha} 4 h m_{|\alpha|} \cdot {(\partial_{y_j} \kappa)} ( 4 h m_{|\alpha|}t Y)\Big)
=\frac{1}{2}(S_1+S_2+S_3) (z).\nonumber
\end{multline} 

We will show that there exists a constant $ B_h>0$ such that
\be
\label{KonacanOcena}
\dss \exp\{{T_{(2^{\s}-1)\t,\s,h}}(1/|tY|)\}|S_1 (z)|\leq B_h \|\varphi\|_{\E_{\t/2^{\s-1},\s,h}},\quad z\in Z_Y.\nonumber
\ee
The estimates for $S_2$ and $S_3$ can be obtained in a similar way.

Let $C_h=C \max\{h, h^{2^{\s-1}}\}$ where $C>0$ is the constant from $\widetilde{(M.2)'}$.
Using $(p+1)^{\s}\leq 2^{\s-1} (p^{\s}+1)$, $p\in\N$, we obtain

\begin{multline}
\frac{ h^{|\beta|^{\s}}}{|tY|^{|\beta|}|\beta|^{(2^{\s}-1)\t |\beta|^{\s}}}|S_1(z)|\leq C_h \|\varphi\|_{\E_{\t/2^{\s-1},\s, h}}\\ \Big( \sum\limits_{\substack{\alpha\in \N^d \\ |\alpha|\leq |\beta|}}+ \sum\limits_{\substack{\alpha\in \N^d \\ |\alpha| > |\beta|}}\Big) \frac{h^{|\beta|^{\s}} C^{|\alpha|^{\s}}_h |\alpha|^{(\t/2^{\s-1}) |\alpha|^{\s}}}{|\beta|^{(2^{\s}-1)\t |\beta|^{\s}}|\alpha|^{\t |\alpha|}}|tY|^{|\alpha|-|\beta|}|\kappa_\alpha (tY)|   \\
= C_h\|\varphi\|_{\E_{\t/2^{\s-1},\s, h}}(I_{1,\beta}+I_{2,\beta}),\quad \beta\in \N^d,\,z\in Z_Y.\nonumber
\end{multline} It remains to show that $\sup_{\beta \in \N^d} I_{1,\beta}$ and  $\sup_{\beta \in \N^d}I_{2,\beta}$ are finite.

First we estimate $I_{1,\beta}$. Note that for $|\alpha|\leq |\beta|$, the left-hand side in \eqref{NosacVelikoY} implies
\begin{multline}
\label{OcenaZaI1}
|tY|^{|\alpha|-|\beta|}|\kappa_\alpha (tY)| \leq (4 h m_{|\alpha|})^{|\beta| - |\alpha|}\leq  \frac{(4h)^{|\beta|} m^{|\beta|}_{|\beta|}}{h^{|\alpha|}m^{|\alpha|}_{|\alpha|}}\\
\leq\frac{ (4h)^{|\beta|}|\alpha|^{\t |\alpha|} |\beta|^{2^{\s-1}\t|\beta|^{\s}}}{h^{|\alpha|}|\alpha|^{2^{\s-1}\t |\alpha|^{\s}}},\quad t\in (0,1],\alpha,\beta \in \N^d.
\end{multline}
Again, when $\t_0=\t (2^{\s-1} - \frac{1}{2^{\s-1}})$, by  \eqref{OcenaZaI1} we have
\be
 I_{1,\beta} \leq \frac{(4h)^{|\beta|}h^{|\beta|^{\s}}}{|\beta|^{(2^{\s-1}-1)\t |\beta|^{\s}}}\sum_{\alpha\in \N^d} \frac{C_h^{|\alpha|^{\s}}}{h^{|\alpha|}|\alpha|^{\t_0 |\alpha|^{\s}}}=C'_h \frac{(4h)^{|\beta|}h^{|\beta|^{\s}}}{|\beta|^{(2^{\s-1}-1)\t |\beta|^{\s}}},\,\,\beta \in \N^d.\nonumber
\ee Hence we conclude $\sup_{\beta\in \N^d} I_{1,\beta} \leq C'_h \exp\{T_{(2^{\s-1}-1)\t,\s,h} (4h) \}<\infty$.

To estimate $I_{2, \beta}$ we first note that for $|\alpha|>|\beta|$ the right-hand side in \eqref{NosacVelikoY} implies
\begin{multline}
\label{OcenaZaI2}
 |tY|^{|\alpha-\beta|}|\kappa_\alpha (tY)| \leq (1/(2 h m_{|\alpha|}))^{|\alpha-\beta|}\leq1/({(2h)^{|\alpha-\beta|}m^{|\alpha-\beta|}_{|\alpha-\beta|}})\\
\leq\frac{|\alpha|^{\t |\alpha|}}{(2h)^{|\alpha-\beta|} |\alpha-\beta|^{2^{\s-1}\t |\alpha - \beta|^{\s}}}, \quad t\in (0,1],\,\,\alpha,\beta \in \N^d.
\end{multline}

Set $C''_h=C \max\{C_h, C^{2^{\s-1}}_h\}$. Using $\widetilde{(M.2)}$, \eqref{OcenaZaI2} and \eqref{SimpleInequality}, for $\beta \in \N^d$, we have

\begin{multline}
 I_{2,\beta} \leq  \sum\limits_{\substack{\alpha\in \N^d \\ |\alpha| > |\beta|}} \frac{h^{|\beta|^{\s}} C^{|\alpha|^{\s}}_h |\alpha|^{(\t/2^{\s-1}) |\alpha|^{\s}}}{|\beta|^{(2^{\s}-1)\t |\beta|^{\s}}(2h)^{|\alpha-\beta|} |\alpha-\beta|^{2^{\s-1}\t |\alpha - \beta|^{\s}}}\\
\leq C \sum\limits_{\substack{\alpha\in \N^d \\ |\alpha| > |\beta|}}\frac{h^{|\beta|^{\s}} (C''_h)^{|\alpha-\beta|^{\s}}  (C''_h)^{|\beta|^{\s}}  |\alpha-\beta|^{\t |\alpha-\beta|^{\s}}|\beta|^{\t |\beta|^{\s}}}{ |\beta|^{(2^{\s}-1)\t |\beta|^{\s}}(2h)^{|\alpha-\beta|} |\alpha-\beta|^{2^{\s-1}\t |\alpha - \beta|^{\s}}} \\
\leq \frac{(C''_h h)^{|\beta|^{\s}}}{|\beta|^{\t(2^{\s}-2)|\beta|^{\s}}} C \sum\limits_{\delta\in \N^d}\frac{(C''_h)^{|\delta|^{\s}}  }{ (2h)^{|\delta|} |\delta|^{(2^{\s-1}-1)\t |\delta|^{\s}}} = C'''_h \frac{(C''_h h)^{|\beta|^{\s}}}{|\beta|^{\t(2^{\s}-2)|\beta|^{\s}}}.\nonumber
\end{multline} In particular, $\sup_{\beta\in \N^d}I_{2,\beta}\leq C'''_h \exp\{T_{(2^{\s}-2)\t, \s, C''_h h}(1)\}<\infty$.

Now,  Stoke's formula  gives
\begin{multline}
\label{StoksovaFormula}
\langle F(x+i 0),\varphi (x)\rangle= \int_K F(x+iY)\Phi (x+iY)dx\\
+2i \sum_{j=1}^d Y_j \int_{0}^1 \int_K \partial_{\overline{z_j}}\Phi(x+itY)F(x+itY)dt dx,
\end{multline}
and we have used the assumptions in Theorem \ref{GlavnaTeorema}, and inequalities \eqref{prvaocenaPhi} and \eqref{AnaliticExt}.

Note that for $H=h$, \eqref{uslovVelikoF} and \eqref{prvaocenaPhi} imply that there exists $A_h>0$ such that
\begin{multline}
\label{PrviIntegral}
|  F(x+iY)\Phi (x+iY) |\leq A_h \|\varphi\|_{\E_{\t/2^{\s-1},\s,h}} \exp\{T_{(2^{\s}-1)\t,\s,h}(1/|Y|)\} \\
= A'_h \|\varphi\|_{\E_{\t/2^{\s-1},\s,h}}, \quad x\in K,
\end{multline}
where $A'_h= A_h\, \exp\{T_{(2^{\s}-1)\t,\s,h}(1/|Y|)\}$.

Moreover,  \eqref{uslovVelikoF} and \eqref{AnaliticExt}  imply that there exists $B_h>0$ such that
\begin{equation}
\label{DrugiIntegral}
|\partial_{\overline{z_j}}\Phi(z)F(z)| \leq B_h \|\varphi\|_{\E_{\t/2^{\s-1},\s,h}},\quad 1\leq j\leq d,\,z\in Z_Y.
\end{equation} Now \eqref{StoksovaFormula}, \eqref{PrviIntegral} and \eqref{DrugiIntegral} implies
$$|\langle F(x+i 0),\varphi (x)\rangle|\leq B'_h \|\varphi\|_{\E_{\t/2^{\s-1},\s,h}}, $$ for suitable constant $B'_h>0$. This completes the proof of the second part of theorem, and the first part follows immediately.

\end{proof}

\begin{rem} \label{remark-structure}
In  order to show that any ultradistribution $f$ is locally (on a bounded open set $U$) the sum of boundary values of analytic functions defined in the corresponding cone domains $\Gamma_j$, $j=1,...,k$, one should proceed as in the classical theory. We multiply $f$ by a cutoff test function $\kappa_U$ equal to $1$  over $U$, and obtain $f_0=f\kappa_U$ equals $f$ on $U.$  Then we divide $\mathbb R^n\setminus\{0\}$ into regular non overlapping cones $\Gamma_{j0}$, $j=1,...,k$, dual cones of $\Gamma_j$, and define
$$
F_j(z)=\langle f_0(t),\int_{\Gamma_{j0}} \exp\{2\pi i(z-t)\eta\}d\eta\rangle, \quad z\in \mathbb R^n + i \Gamma_j, \;\; j=1,...,k.
$$
Now one can get the growth conditions for $F_j, j=1,...,k,$ and show that
$$f_0=\sum_{j=1}^k\lim_{y\rightarrow 0,y\in \Gamma_j}F_j(x+iy),\quad x\in\mathbb R^n.$$
The details  will be given in a separate contribution where we will consider $L^p$ versions of new ultradistributions spaces similar to the corresponding ones   in \cite{PilipovicKnjiga}.
\end{rem}

\section{Wave front sets}\label{secTalasniFront}

In this section we analyze wave front sets $\WF_{\t,\s}(u)$ related to the classes $\E_{\t,\s}(U)$ introduced in Section \ref{secSpaces}. We refer to  \cite{ PTT-02, PTT-03, PTT-04, TT0, TT} for properties of $\WF_{\t,\s}(u)$ when $u$ is a Schwartz distribution.

We begin with the  definition.
\begin{de}
\label{WFtausigma}
Let $\t>0$, $\s>1$, $U$ open set in $\Rd$ and $(x_0,\xi_0)\in U\times \Rd\backslash\{0\}$. Then for $u\in \D'_{\{\t,\s\}}(U)$ (respectively $ \D'_{(\t,\s)}(U)$),  $(x_0,\xi_0)\not \in \WF_{\{\t,\s\}}(u)$ (resp. $(x_0,\xi_0)\not \in \WF_{(\t,\s)}(u)$) if and only if there exists a conic
neighborhood $\Gamma$ of $\xi_0$, a compact neighborhood
$ K $ of $x_0$, and
$\phi\in \D_{\{\t,\s\}}(U)$ (respectively $\phi\in \D_{(\t,\s)}(U)$)  such that $\supp\phi\subseteq K$, $\phi=1$ on some neighborhood of $x_0$, and
\be
\label{WFuslov1}
|\widehat{\phi u}(\xi)|\leq A \exp\{-T_{\t,\s,h}(|\xi|)\},\quad \xi\in \Gamma\,,\nonumber
\ee
for some $A,h>0$ (resp. for any $h>0$ there exists $A>0$).
\end{de}

We will write $\WF_{\t,\s}(u)$ for $\WF_{(\t,\s)}(u)$ or $\WF_{\{\t,\s\}}(u)$.
\begin{rem}
Note that  $\WF_{\t,1}(u)=\WF_{\t}(u)$, $\t>1$, are Gevrey wave font sets investigated in \cite{Rodino}.
\end{rem}

Moreover (cf. \cite{PTT-03}), for $0<\t_1<\t_2$ and $\s>1$ we have
$$
\WF (u) \subset \WF_{\t_2,\s}(u)\subset \WF_{\t_1,\s}(u)\subset \bigcap_{t>1}\WF_{t}(u)\subset \WF_A (u),\quad u\in \D'(U),
$$
where  $\WF_A$ denotes the analytic wave front set.

Let
\be
\WF^{\{\s\}}(u)=\bigcap_{\t>0} \WF_{\t,\s} (u), \quad u\in  {\D}^{'\{\s\}}(U), \nonumber
\ee
and
\be
\WF^{(\s)}(u)=\bigcup_{\t>0} \WF_{\t,\s} (u), \quad u\in  {\D}^{'(\s)}(U).\nonumber
\ee
For such wave front sets we have the following corollary which is an immediate consequence of Lemma \ref{OcenazaAsociranu}.
\begin{cor}
\label{WFSigma}
Let $u\in \D^{'\{\s\}}(U)$ (resp. $ \D^{'(\s)}(U)$), $\s>1$. Then $(x_0,\xi_0)\not \in \WF^{\{\s\}}(u)$ (resp. $(x_0,\xi_0)\not \in \WF^{(\s)}(u)$) if and only if there exists a conic
neighborhood $\Gamma$ of $\xi_0$, a compact neighborhood
$ K $ of $x_0$, and
$\phi\in \D^{\{\s\}}(U)$ (resp $\phi\in \D^{(\s)}(U)$)  such that $\supp\phi\subseteq K$, $\phi=1$ on some neighborhood of $x_0$, and
\be
\label{WFuslov2}
|\widehat{\phi u}(\xi)|\leq  A |\xi|^{ - H \Big( \frac{\ln |\xi|}{\ln ( \ln  |\xi|) }\Big)^{\frac{1}{\s-1}}},\quad \xi\in \Gamma\,,\nonumber
\ee
for some $A,H>0$ (resp. for any $H>0$ there exists $A>0$).
\end{cor}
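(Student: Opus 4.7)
The plan is to use Lemma \ref{OcenazaAsociranu} to translate the condition $|\widehat{\phi u}(\xi)|\leq A \exp\{-T_{\t,\s,h}(|\xi|)\}$ from Definition \ref{WFtausigma} into the logarithmic form appearing in the corollary. The crucial observation is that, by Lemma \ref{OcenazaAsociranu}, the exponent in the bounds for $\exp\{T_{\t,\s,h}(k)\}$ is proportional to $c_1(\t)=\left(\tfrac{\s-1}{\t\s}\right)^{1/(\s-1)}$, which ranges over $(0,\infty)$ as $\t$ ranges over $(0,\infty)$; thus by varying $\t$ one can tune the ``logarithmic'' exponent $H$ to be arbitrarily large or small. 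This is what converts the $\t$-quantifier on the left hand sides ($\bigcap$ or $\bigcup$) into the $H$-quantifier on the right hand sides (``for some $H$'' or ``for any $H$'').

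For the Roumieu statement I would use $\WF^{\{\s\}}(u)=\bigcap_{\t>0}\WF_{\{\t,\s\}}(u)$, so $(x_0,\xi_0)\notin\WF^{\{\s\}}(u)$ iff there is some $\t_0>0$ with $(x_0,\xi_0)\notin\WF_{\{\t_0,\s\}}(u)$. For such $\t_0$, Definition \ref{WFtausigma} produces $\phi\in\D_{\{\t_0,\s\}}(U)\subseteq \D^{\{\s\}}(U)$ and constants $A_0,h_0>0$ satisfying the $T_{\t_0,\s,h_0}$-bound; applying the lower inequality of Lemma \ref{OcenazaAsociranu} gives the desired bound with some $H>0$ depending on $\t_0, h_0, \s$. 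Conversely, starting from the logarithmic bound with $\phi\in \D^{\{\s\}}(U)$ and some $A,H>0$, I pick $\t$ large enough that $\phi\in \D_{\{\t,\s\}}(U)$ (possible since the Roumieu classes enlarge with $\t$) and $c_1(\t)<H$, together with any $h>0$; the upper inequality of Lemma \ref{OcenazaAsociranu} then yields the $T_{\t,\s,h}$-bound on $\Gamma$, giving $(x_0,\xi_0)\notin\WF_{\{\t,\s\}}(u)$, hence not in the intersection.

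The Beurling case is dual. Since $\WF^{(\s)}(u)=\bigcup_{\t>0}\WF_{(\t,\s)}(u)$, we need $(x_0,\xi_0)\notin\WF_{(\t,\s)}(u)$ for \emph{every} $\t>0$. I would fix a single cutoff $\phi\in \D^{(\s)}(U)$, which lies in every $\D_{(\t,\s)}(U)$ by definition of the projective limit; the standard independence of the wave front set from the choice of admissible cutoff guarantees that the same $\phi$ can be used for all $\t$. The condition then reads: for every $\t>0$ and every $h>0$ there is $A>0$ with the $T_{\t,\s,h}$-bound. Given an arbitrary $H>0$, the lower bound of Lemma \ref{OcenazaAsociranu} with $\t$ chosen so small that $\tfrac{1}{2}\tfrac{\s-1}{\s}c_1(\t)\geq H$ (and any fixed $h$) produces the logarithmic bound with the prescribed $H$. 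The reverse implication is symmetric: given the logarithmic bound for every $H>0$, for each prescribed $\t,h$ pick $H>c_1(\t)$ and apply the upper bound to recover the $T_{\t,\s,h}$-estimate.

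The one genuine technical point, and the main obstacle, is that the sharp bounds of Lemma \ref{OcenazaAsociranu} are stated in terms of $\left(\tfrac{\ln k}{\ln(c_2\ln k)}\right)^{1/(\s-1)}$ while the corollary uses $\left(\tfrac{\ln k}{\ln\ln k}\right)^{1/(\s-1)}$. Since $\ln(c_2\ln k)/\ln\ln k\to 1$ as $k\to\infty$, the two expressions differ by a factor arbitrarily close to $1$ for $|\xi|$ large, so after a harmless adjustment of the constant $H$ the two formulations are interchangeable. For $|\xi|$ in a bounded range the discrepancy is absorbed into the constant $A$ since $\widehat{\phi u}$ is continuous and bounded on compact sets. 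Once this asymptotic comparability is established, the corollary follows from Lemma \ref{OcenazaAsociranu} purely by the quantifier bookkeeping sketched above.
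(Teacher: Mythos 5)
Your proposal is correct and is essentially the argument the paper intends: the paper states this corollary with no written proof at all, calling it an immediate consequence of Lemma \ref{OcenazaAsociranu}, and your quantifier bookkeeping (tuning the exponent $H$ via $c_1(\t)=\big(\frac{\s-1}{\t\s}\big)^{1/(\s-1)}$, which sweeps $(0,\infty)$ as $\t$ does) together with the observation that $\ln(c_2\ln k)/\ln\ln k\to 1$ is exactly the content hidden behind that assertion. The one point you invoke that the paper also leaves implicit is the uniformity in $\t$ of the data in the Beurling ``only if'' direction --- one needs not merely that the same $\phi$ is admissible for every $\t$, but that a single pair $(\phi,\Gamma)$ yields the $T_{\t,\s,h}$-estimate for every $\t$ simultaneously, which rests on the cutoff- and cone-independence of $\WF_{\t,\s}$ established in the cited earlier papers.
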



We write $u(x)=F(x+i\Gamma \,0)$ if $u(x)$ is obtained as boundary value of an analytic function $F(x+iy)$ as $y\to 0$ in $\Gamma$. Recall (cf. \cite{H})
$$\Gamma_0=\{\xi \in \Rd\,|\, y\cdot \xi \geq 0\,{\rm{\,for\,\, all\,}}\,y\in \Gamma\}$$ denotes the dual cone of $\Gamma$.

To conclude the paper we prove the following theorem.

\begin{te}
\label{GlavnaTeorema2}
Let the assumptions of Theorem \ref{GlavnaTeorema} hold, and let $u(x)=F(x+i\Gamma \,0)\in  \D^{'(\s)}(U)$ (resp. $\D^{'\{\s\}}(U)$). Then
\be
\WF^{(\s)}(u)\subseteq U\times \Gamma_0,\quad(resp.\,\,\WF^{\{\s\}}(u)\subseteq U \times \Gamma_0).\nonumber
\ee
More precisely, if $u(x)=F(x+i\Gamma \,0)\in  \D'_{\{\t/2^{\s-1},\s\}}(U)$ (resp. $\D'_{(\t/2^{\s-1},\s)}(U)$) then
\be
\label{Posledica2}
\WF_{\{(2^{\s}-1)\t,\s\}}(u)\subseteq U\times \Gamma_0,\quad(resp.\,\,\ \WF_{((2^{\s}-1)\t,\s)}\subseteq U \times \Gamma_0).\nonumber
\ee
\end{te}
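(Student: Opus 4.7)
We prove the more precise second statement; the first assertion ($\WF^{(\s)}(u) \subseteq U \times \Gamma_0$, resp.\ the Roumieu version) then follows by taking the union (resp.\ intersection) over $\t>0$ and invoking Lemma~\ref{OcenazaAsociranu} to translate between the two equivalent formulations of the growth hypothesis on $F$.

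Fix $(x_0, \xi_0) \in U \times (\Rd\setminus\{0\})$ with $\xi_0 \notin \Gamma_0$. By the definition of the dual cone we can choose $y^* \in \Gamma$ with $|y^*| < \gamma$ and $y^* \cdot \xi_0 < 0$, and by continuity produce an open conic neighborhood $\Gamma'$ of $\xi_0$ together with $c > 0$ such that $y^* \cdot \xi \leq -c|\xi|$ for every $\xi \in \Gamma'$. Also pick a compact neighborhood $K \subset U$ of $x_0$ and a test function $\phi \in \D^K_{\t/2^{\s-1},\s}$ with $\phi \equiv 1$ near $x_0$; since $\D_{\t/2^{\s-1},\s} \subset \D_{(2^\s-1)\t,\s}$, this $\phi$ is admissible in Definition~\ref{WFtausigma}.

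The heart of the argument is to estimate $\widehat{\phi u}(\xi)$ for $\xi \in \Gamma'$ via the Stokes identity from the proof of Theorem~\ref{GlavnaTeorema}, applied to the test function $\phi(x)\,e^{-ix\cdot\xi}$. Since $z \mapsto e^{-iz\cdot\xi}$ is entire, the product $\Phi_\xi(z) := \phi_{aae}(z)\,e^{-iz\cdot\xi}$, with $\phi_{aae}$ the almost analytic extension from~\eqref{AlmostAnalyticExtension}, is an almost analytic extension of $\phi(x)\,e^{-ix\cdot\xi}$ and satisfies $\partial_{\overline{z_j}}\Phi_\xi = (\partial_{\overline{z_j}}\phi_{aae})\cdot e^{-iz\cdot\xi}$. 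Taking $Y = sy^*$ with $s \in (0,1]$ to be chosen, Stokes yields $\widehat{\phi u}(\xi) = I_1(s;\xi) + I_2(s;\xi)$, where $I_1$ is the integral on the face $z = x+iY$ and $I_2$ the slab integral over $t \in [0,1]$, $z = x+itY$. Using $|e^{-i(x+itY)\cdot\xi}| = e^{tY\cdot\xi} \leq e^{-cst|\xi|}$ on $\Gamma'$ together with~\eqref{uslovVelikoF}, \eqref{prvaocenaPhi} and the essential estimate~\eqref{AnaliticExt}, we will obtain
$$|I_1(s;\xi)| \leq C \exp\{T_{(2^\s-1)\t,\s,H}(1/(s|y^*|)) - cs|\xi|\},$$
$$|I_2(s;\xi)| \leq C s\int_0^1 \exp\{T_{(2^\s-1)\t,\s,H}(1/(ts|y^*|)) - T_{(2^\s-1)\t,\s,h}(1/(ts|y^*|)) - cst|\xi|\}\,dt,$$
where $h$ is the parameter used in the almost analytic extension (taken so that $\|\phi\|_{\E_{\t/2^{\s-1},\s,h}} < \infty$).

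The remaining step is parameter selection. Guided by Lemma~\ref{OcenazaAsociranu}, the natural scaling is $s = s(\xi)$ of order $T_{(2^\s-1)\t,\s,h'}(|\xi|)/|\xi|$ for the target $h' > 0$ (any $h' > 0$ in the Beurling case): then $cs|\xi|$ supplies the exponential budget needed for the $-T(|\xi|)$ decay, while $T_{(2^\s-1)\t,\s,H}(1/s)$ stays of comparable asymptotic order but with strictly smaller leading constant once $h$ is chosen sufficiently large relative to $H$. We expect the main obstacle to be making this balance uniform in $t \in (0,1]$ inside $I_2$: in contrast with the classical situation where condition $(M.2)$ makes the cancellation between the bounds on $F$ and $\partial_{\overline{z}}\phi_{aae}$ automatic, here one must invoke the sharp asymptotics of $T_{\t,\s,h}$ from Lemma~\ref{OcenazaAsociranu} together with the logarithmic slack between $T_h$ and $T_H$ to verify that the three exponential factors combine favorably over the whole slab. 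Once the estimate $|\widehat{\phi u}(\xi)| \leq A' e^{-T_{(2^\s-1)\t,\s,h'}(|\xi|)}$ holds on $\Gamma'$, Definition~\ref{WFtausigma} yields $(x_0, \xi_0) \notin \WF_{\{(2^\s-1)\t,\s\}}(u)$; the Beurling variant is analogous, using that \eqref{uslovVelikoF} is available for every $H > 0$.
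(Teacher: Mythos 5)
There is a genuine gap in the central estimate for the slab integral $I_2$, and it stems from your choice of test class. You take $\phi\in\D^K_{\t/2^{\s-1},\s}$, so that \eqref{AnaliticExt} gives $|\partial_{\overline{z_j}}\Phi|\lesssim \exp\{-T_{(2^\s-1)\t,\s,h}(1/|tY|)\}$, i.e.\ decay at the \emph{same} level $\t_0=(2^\s-1)\t$ at which $F$ grows by \eqref{uslovVelikoF}. Your $I_2$ bound therefore hinges on the difference $T_{\t_0,\s,H}(k)-T_{\t_0,\s,h}(k)$ being exploitable, and you propose to use the ``logarithmic slack'' between $T_H$ and $T_h$. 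But Lemma \ref{OcenazaAsociranu} shows that the leading constant $c_1=\big(\tfrac{\s-1}{\t\s}\big)^{1/(\s-1)}$ in the asymptotics of $T_{\t,\s,h}$ is independent of $h$; the parameter $h$ enters only through $c_2$ inside the inner $\ln\ln$. Consequently $T_{\t_0,\s,H}(k)-T_{\t_0,\s,h}(k)=o\big(T_{\t_0,\s,\cdot}(k)\big)$ for any choice of $h,H$, and it can never be bounded by $-T_{\t_0,\s,h''}(k)+C$. If you simply take $h>H$ so the difference is nonpositive, the integrand is bounded by $e^{-cst|\xi|}$ and the slab integral yields only $O(1/|\xi|)$ --- polynomial, not the required $\exp\{-T_{\t_0,\s,h'}(|\xi|)\}$. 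Your proposed rescaling $s=s(\xi)\sim T(|\xi|)/|\xi|$ does not rescue this: it reintroduces $\exp\{T_{\t_0,\s,H}(1/(s|y^*|))\}\approx\exp\{T_{\t_0,\s,H}(|\xi|)(1+o(1))\}$ on the face term, which competes with $e^{-cs|\xi|}=e^{-cT(|\xi|)}$ at the same leading order with an uncontrolled constant $c=\gamma_1$.

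The paper's proof closes this by keeping $Y$ \emph{fixed} (so the face term is trivially $O(e^{-\gamma_1|\xi|})$) and, crucially, by taking the cut-off in the strictly smaller class $\D^K_{\t/4^{\s-1},\s}$. Then the almost analytic extension error decays like $\exp\{-T_{\t_0/2^{\s-1},\s,h_2}(1/|tY|)\}$, one full level below $\t_0$ in the $\t$-parameter, and Lemma \ref{NejednakostLema2}\,$(b)$, i.e.\ \eqref{Submulti}, converts the surplus into a genuine leftover $-T_{\t_0,\s,c_{h_1,h_2}}(1/(t|Y|))$; Lemma \ref{NejednakostLema2}\,$(c)$, i.e.\ \eqref{Submulti1}, then trades this against $-t\gamma_1|\xi|$ to produce $-T_{\t_0,\s,c'_{h_1,h_2}}(|\xi|)$ uniformly in $t$. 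So the gain must come from the $\t$-index of the test class (this is exactly the role the condition $\widetilde{(M.2)}$ plays here in place of $(M.2)$), not from the $h$-index of the associated function. Your overall architecture (Stokes identity applied to $\phi(x)e^{-ix\cdot\xi}$, splitting into face and slab terms, using $e^{tY\cdot\xi}\leq e^{-\gamma_1 t|\xi|}$ on the cone) matches the paper, but the step you yourself flag as ``the main obstacle'' is precisely where the argument, as you set it up, cannot be completed.
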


\begin{proof}

Fix $x_0\in U$ and $\xi_0\not\in \Gamma_0\backslash\{0\}$. Then there exist  $ Y=(Y_1,\dots, Y_d)\in \Gamma$, $|Y|<\gamma$, such that $Y\cdot \xi_0<0$. Moreover, there exists conical neighborhood $V$ of $\xi_0$ and constant $\gamma_1>0$ such that $Y\cdot \xi\leq-\gamma_1 |\xi|$, for all $\xi\in V$. To see that, note that there exists $B_{r}(\xi_0)$ such that $Y\cdot \xi<0$ for all $\xi\in B_{r}(\xi_0)$. The assertion follows for $\dss V=\{s\xi \,|\, s>0,\, \xi\in B_{r}(\xi_0) \}$ and $\dss \gamma_1=\inf_{\xi\in V,\,|\xi|=1}(-Y)\cdot \xi$.

Let $\t>0$ and $\t_0 = (2^{\s}-1) \t$.  If $u(x)=F(x+i\Gamma \,0)\in  \D'_{\t/2^{\s-1},\s}(U)$ as in Theorem \ref{GlavnaTeorema}, then

\be
\label{uslovVelikoFPosledica1}
|F(z)|\leq A\, \exp\{T_{\t_0,\s,h_1}(1/|y|)\},\quad z=x+iy \in Z,
\ee
for suitable constants $ A, h_1>0$.

Choose $\varphi\in\D^K_{\t/4^{\s-1},\s}$ such that $\varphi=1$ in a neighborhood of $x_0$ and let $Z_Y$ be as in \eqref{ZY}. Then there exists $\Phi$ (see \eqref{AlmostAnalyticExtension}) such that
\be
\label{AnaliticExtPosledica}
|\Phi(z)|\leq A_1, \;\;\; \text{and} \;\;\; |\partial_{\overline{z_j}}\Phi(z)|\leq A_2 \exp\{-T_{\t_0/2^{\s-1},\s,h_2}(1/|tY|)\},
\ee
$ z\in Z_Y$, $1\leq j \leq d$, for suitable constants $A_1,A_2,h_2>0$.

Note that  formula \eqref{StoksovaFormula} implies

\begin{multline}
\label{StoksovaFormulaWF}
\widehat{(\varphi u)}(\xi)=\langle u(x) e^{-i x\cdot \xi},\varphi (x)\rangle= \int_K F(x+iY) e^{-i (x+i Y)\cdot\xi}\Phi (x+iY)dx\\
+2i \sum_{j=1}^d Y_j \int_{0}^1 \int_K\partial_{\overline{z_j}}\Phi(x+itY)F(x+itY)e^{-i (x+i tY)\cdot\xi}dt dx, \quad\xi \in V.
\end{multline} Using \eqref{uslovVelikoFPosledica1} and \eqref{AnaliticExtPosledica} we have
\be
\label{StoksovaFormulaWF1}
|F(x+iY) \Phi(x+iY)e^{-i(x+iY)\xi}|\leq B\,  e^{-\gamma_1 |\xi|},\quad x\in K,\, \xi\in V,
\ee for some $B>0$.

Moreover, for $z\in Z_Y$ and $\xi\in V$ we have
\begin{multline}
\label{StoksovaFormulaWF2}
|F(z) \partial_{\overline{z_j}}\Phi(z)e^{-iz\cdot\xi}| \\
\leq  C \exp\{T_{\t_0,\s,h_1}(1/|tY|)-T_{\t_0/2^{\s-1},\s,h_2}(1/|tY|)-t\gamma_1 |\xi|\}\\
\leq  C_1 \exp\{{-T_{\t_0,\s,c_{h_1,h_2}}(1/(t\gamma_1))-t\gamma_1|\xi|}\}
\leq C_ 2 \exp\{{-T_{\t_0,\s,c'_{h_1,h_2}}(|\xi|)}\},
\end{multline}
for suitable constants $C_1,C_2, c_{h_1,h_2}, c'_{h_1,h_2}>0$, where we have used inequalities \eqref{Submulti} and \eqref{Submulti1}.

Finally, using \eqref{StoksovaFormulaWF}, \eqref{StoksovaFormulaWF1} and \eqref{StoksovaFormulaWF2} we obtain
\be
|\widehat{(\varphi u)}(\xi)|\leq B_1 (e^{-\gamma_1 |\xi|}+\exp\{-T_{\t_0,\s,c'_{h_1,h_2}}(|\xi|) \} )\leq B_ 2 \exp\{-T_{\t_0,\s,c'_{h_1,h_2}}(|\xi|)\},\nonumber
\ee
for $\xi\in V$ and for suitable constant  $B_2>0$. This completes the proof.
\end{proof}

\section*{Appendix}

In the following Lemma we study  $T_{\t,\s,h}(k)$ in some details.

\begin{lema}
\label{NejednakostLema2}
Let $h>0$, and $T_{\t,\s,h}$ be given by \eqref{asociranaProduzena}, and let $T^{*}_{\t,\s,h} $ be given by \eqref{Tstar}. Then
\begin{itemize}
\item[$a)$] if $h_1<h_2$ then $\dss {T_{\t,\s,h_1}(k)}<{T_{\t,\s,h_2}(k)}$, $k>0$. Moreover, for any $h>0$ there exists $H>h$ such that
\be
\label{Rastpoh}
{T_{\t,\s,h}(k)}\leq {T^{*}_{\t,\s,h}(k)}\leq {T_{\t,\s,H}(k)},\quad k>0.
\ee

\item[$b)$] for $h_1,h_2>0$ there exists $C,c_{h_1,h_2}>0$ such that
\be
\label{Submulti}
{T_{\t,\s,h_1} (k)} + {T_{\t,\s,h_2}(k)} \leq{T_{\t/2^{\s-1},\s,c_{h_1,h_2}}(k)} +\ln C
\quad k>0,
\ee
\item [$c)$] for every $h>0$ there exits $H>0$ such that

\be
\label{Submulti1}
{T_{\t,\s,H}(l)}  \leq {T_{\t,\s,h}(1/k)+k l} ,\quad k,l>0.
\ee
\end{itemize} 

\end{lema}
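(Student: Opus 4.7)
The plan is to work directly with the sups defining $T_{\t,\s,h}(k)$ and $T^*_{\t,\s,h}(k)$, exploiting the explicit form $M_p^{\t,\s}=p^{\t p^\s}$ and the fact that, since $\s>1$, the quantity $M_\s:=\sup_{p\geq1}(\ln p)/p^{\s-1}$ is finite.

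For part (a), monotonicity in $h$ follows termwise from $h_1^{p^\s}\leq h_2^{p^\s}$, with strict inequality for every $p\geq1$ (hence strictly so as soon as $k$ is large enough that the sup is not realised only at $p=0$). The crucial rewriting $T^*_{\t,\s,h}(k)=\sup_p\ln(h^{p^\s}k^p p^p/p^{\t p^\s})$, obtained from $p^{p(\t p^{\s-1}-1)}=p^{\t p^\s}/p^p$, shows that $T^*$ carries an extra nonnegative $p\ln p$ per term, giving $T_{\t,\s,h}\leq T^*_{\t,\s,h}$ directly. For the reverse $T^*_{\t,\s,h}\leq T_{\t,\s,H}$, termwise one needs $p^p\leq(H/h)^{p^\s}$, i.e.\ $(\ln p)/p^{\s-1}\leq\ln(H/h)$; this is uniform in $p\geq1$ as soon as $H\geq h\,e^{M_\s}$.

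For part (b), I expand the sum of sups as $\sup_{p,q}\ln\bigl[h_1^{p^\s}h_2^{q^\s}k^{p+q}/(p^{\t p^\s}q^{\t q^\s})\bigr]$ and apply $\widetilde{(M.2)}$ to the sequence $M_p^{\t/2^{\s-1},\s}$, which yields $(p+q)^{(\t/2^{\s-1})(p+q)^\s}\leq C^{p^\s+q^\s+1}p^{\t p^\s}q^{\t q^\s}$. Using the superadditivity $(p+q)^\s\geq p^\s+q^\s$ (true for $\s\geq1$) and choosing $c_{h_1,h_2}=C\max\{h_1,h_2,1\}$ absorbs $C^{p^\s+q^\s}h_1^{p^\s}h_2^{q^\s}$ into $c_{h_1,h_2}^{(p+q)^\s}$; the leftover single factor of $C$ in $C^{p^\s+q^\s+1}$ becomes the additive $\ln C$ on the right-hand side after taking the sup through $n=p+q$ and passing the logarithm.

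For part (c), start from the Young-type bound $(kl)^p\leq p!\,e^{kl}\leq p^p e^{kl}$, giving $l^p\leq p^p e^{kl}/k^p$. Inserting this termwise into the definition of $T_{\t,\s,H}(l)$ and using $p^p/p^{\t p^\s}=1/p^{p(\t p^{\s-1}-1)}$ produces $T_{\t,\s,H}(l)\leq T^*_{\t,\s,H}(1/k)+kl$. The finishing step is the reverse of the second comparison in (a): termwise $H^{p^\s}p^p\leq h^{p^\s}$ reads $(\ln p)/p^{\s-1}\leq\ln(h/H)$, so it holds uniformly once $H\leq h\,e^{-M_\s}$, giving $T^*_{\t,\s,H}(1/k)\leq T_{\t,\s,h}(1/k)$ and hence the claim. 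The main obstacle is assembling this last step: one has to recognise the $T^*$-sum concealed inside the Young estimate and then dial the single parameter $H$ downward just enough to invert the $T$-versus-$T^*$ comparison uniformly in $p$. All three parts hinge on $\s>1$ through the boundedness of $(\ln p)/p^{\s-1}$, which replaces the Komatsu condition $(M.2)$ in this extended setting.
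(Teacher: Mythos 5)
Your proof is correct and follows essentially the same route as the paper's: the same rewriting $p^{p(\t p^{\s-1}-1)}=p^{\t p^{\s}}/p^{p}$ combined with $p^{p}\leq C^{p^{\s}}$ (boundedness of $(\ln p)/p^{\s-1}$ for $\s>1$) for part $a)$, the same interplay of $\widetilde{(M.2)}$ with the superadditivity of $p\mapsto p^{\s}$ for part $b)$, and the same Young-type bound $(kl)^{p}\leq p^{p}e^{kl}$ for part $c)$. The only deviations are cosmetic: in $b)$ you take $c_{h_1,h_2}=C\max\{h_1,h_2,1\}$ where the paper uses $(h_1+h_2)^{c_{\s}}$ with a case distinction on $h_1+h_2\gtrless 1$, and in $c)$ you pass through $T^{*}_{\t,\s,H}(1/k)$ termwise where the paper restricts a double supremum to its diagonal.
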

\begin{proof}
$a)$ Notice that for arbitrary $h>0$,

$$\ln\frac{h^{p^{\s}}k^{p}}{p^{\t p^{\s}}}\leq\ln \frac{p^{ p}h^{p^{\s}}k^{p}}{p^{\t p^{\s}}}\leq \ln\frac{(C h)^{p^{\s}}k^{p}}{p^{\t p^{\s}}},\quad k>0,$$ where for the second inequality we use that for every $\s>1$ there exists $C>1$ such that $p^{ p}\leq C^{p^\s}$, $ p\in \N$ (see the proof of Proposition 2.1. in \cite{PTT-01}). Now
\eqref{Rastpoh} follows by putting $H=Ch$.

$ b)$ Let $h_1,h_2>0$. We will use the following simple inequality
\be
\label{SimpleInequality}
\dss p^{\s}+q^{\s} \leq (p+q)^{\s}\leq 2^{\s-1}(p^{\s}+q^{\s}), \quad p,q\in \N.
\ee Since, $\dss h_1^{p^{\s}}h_2^{q^{\s}}\leq (h_1+h_2)^{p^{\s}+q^{\s}}$ we conclude that $ h_1^{p^{\s}}h_2^{q^{\s}}\leq (h_1+h_2)^{(p+q)^{\s}}$ when $h_1+h_2 \geq 1$ and  $\dss  h_1^{p^{\s}}h_2^{q^{\s}}\leq (h_1+h_2)^{(1/2^{\s-1})(p+q)^{\s}}$ when $0<h_1+h_2<1$.

Hence there exists $0<c_{\s}\leq 1$ such that
$$\ln\frac{{h_1}^{p^{\s}} k^p}{p^{\t p^{\s}}}+\ln \frac{{h_2}^{q^{\s}} k^q}{q^{\t q^{\s}}}\leq\ln \frac{(C(h_1+h_2)^{c_{\s}})^{(p+q)^{\s}} k^{p+q}}{(p+q)^{(\t /2^{\s-1})(p+q)^{\s}}}+\ln C,\quad p,q\in \N,$$ where $C>0$ is constant appearing in $\widetilde{(M.2)}$. Now \eqref{Submulti} follows after taking supremums over $p,q\in \N$.

$d)$ Recall (see \cite{KomatsuNotes}), there exists $A>0$ such that $\dss {k l}=\sup_{p\in\N}\ln\frac {A^p k^p l^p}{p^p}$.
Note that for every $\s>1$ there exists $0<C<1$ such that $\dss \frac{1}{p^p}\geq C^{p^{\s}},\quad p\in \N.$

Then for arbitrary $h>0$ we have
\begin{multline}
{T_{\t,\s,h}(1/k)+k l}=\sup_{p,q\in \N}\ln \frac{h^{p^{\s}}}{k^p p^{\t p^{\s}}}\frac{A^q k^q l^q}{q^q}\geq \sup_{p,q\in \N, p=q}\ln \frac{(A'Ch)^{p^{\s}}l ^{p}}{p^{\t p^{\s}}}\\=T_{\t,\s,H}( l),\quad k,l>0,\nonumber
\end{multline} where $A' = \min\{1, A\}$. This proves \eqref{Submulti1}.
\end{proof}

Finally we discuss certain stability and embedding properties of ${\E}_{\t, \s}(U)$ given by \eqref{NewClassesInd} and \eqref{NewClassesProj}. Analogous considerations hold when the spaces $\D_{\t,\s}(U)$ from \eqref{NewClassesInd2} and \eqref{NewClassesProj2}
are considered instead.

Let  $a_{\alpha} \in {\E}_{(\t, \s)}(U)$ (resp. $ a_{\alpha} \in {\E}_{\{\t, \s\}}(U)$), where $U$ is an open set in
$\Rd$. Then we say that
$$
P(x,\partial)=\suml_{|\alpha|=0}^{\infty}a_{\alpha}(x){\partial}^{\alpha}
$$
is an ultradifferentiable operators of class $(\t,\s)$ (resp. $\{\t,\s\}$) on  $U$ if for every $K\subset\subset U$ there exists constant $L>0$ such that for any $h>0$ there exists $A>0$ (resp. for every $K\subset\subset U$ there exists $h>0$ such that for any $L>0$ there exists $A>0$) such that,
\begin{equation*}
\label{Operatortausigma}
\sup_{x\in K}|\partial^{\beta}a_{\alpha}(x)|\leq A h^{{|\beta|}^{\s}}|\beta|^{\t{|\beta|}^{\s}}\frac{L^{|\alpha|^{\s}}}{|\alpha|^{\t 2^{\s-1}{|\alpha|}^{\s}}},\quad {\alpha,\beta \in \N^d}.
\end{equation*}

\par

We refer to \cite{PTT-02} for the proof of the following continuity and embedding properties.

\begin{prop}
\label{TeoremaZatvorenostUltraDfOP}
\begin{itemize}\item [$a)$]  Let $P(x,\partial)$ be a differential operator of class $(\t,\s)$ (resp. $\{\t,\s\}$). Then $\dss P(x,\partial):\quad {\E}_{\t, \s}(U) \longrightarrow {\E}_{\t 2^{\s-1}, \s}(U)$ is a continuous linear mapping; the same holds for
$$
P(x,\partial):\quad \varinjlim_{\t\to \infty}\E_{\t,\s}(U) \longrightarrow \varinjlim_{\t\to \infty}\E_{\t,\s}(U).
$$

\item [$b)$]  Let $\s_1\geq 1$. Then for every $\s_2>\s_1$
\begin{equation*} \label{Theta_S_embedd}
\varinjlim_{\t\to \infty}{\E}_{\t,
{\s_1}}(U)\hookrightarrow \varprojlim_{\t\to 0^+} {\E}_{\t,
{\s_2}}(U).
\end{equation*}
\item [$c)$] If $0<\t_1<\t_2$, then
\be \label{RoumieuBeurling} \E_{\{\t_1,\s\}}(U)\hookrightarrow
\E_{(\t_2,\s)}(U)\hookrightarrow \E_{\{\t_2,\s\}}(U), \;\;\; \s> 1,\nonumber \ee
and
$$
\varinjlim_{\t\to \infty}{\E}_{\{\t, {\s}\}}(U)= \varinjlim_{\t\to \infty} {\E}_{(\t, {\s})}(U),
$$
$$
\varprojlim_{\t\to 0^+}{\E}_{\{\t, {\s}\}}(U)= \varprojlim_{\t\to 0^+} {\E}_{(\t, {\s})}(U), \;\;\; \s> 1.
$$
\end{itemize} Consequently we obtain that

\begin{equation*}
\label{GevreyNewclass}
\varinjlim_{t\to\infty} \E_{t}(U)\hookrightarrow {\E}_{\tau, \s}(U)
\hookrightarrow  C^{\infty}(U), \;\;\;
\tau>0, \; \s>1,
\end{equation*} where $ \E_{t}(U)$ is Gevrey space with index $t>1$.
\end{prop}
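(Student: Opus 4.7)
The plan is to treat part (a) as the substantive statement, since it is the only one that uses the new condition $\widetilde{(M.2)}$ in an essential way, and then to derive (b), (c) and the final consequence from elementary asymptotic comparisons of the weight $|\alpha|^{\t|\alpha|^\s}$ as $\s$ or $\t$ vary, together with formal manipulations of inductive/projective limits.

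For part (a), I would fix $K\subset\subset U$ and $\phi\in\E_{\t,\s}(U)$, and apply Leibniz to obtain
\[
\partial^{\beta}(P(x,\partial)\phi)(x)=\sum_{\alpha\in\N^d}\sum_{\gamma\leq\beta}\binom{\beta}{\gamma}\partial^{\gamma}a_{\alpha}(x)\,\partial^{\alpha+\beta-\gamma}\phi(x).
\]
Substituting the hypothesized bounds on $\partial^{\gamma}a_{\alpha}$ (with parameters $h,L$) and on $\partial^{\alpha+\beta-\gamma}\phi$ (with parameter $h_0$), the goal is a dominating estimate of the form $A\,H^{|\beta|^\s}|\beta|^{\t 2^{\s-1}|\beta|^\s}$. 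The key algebraic move is to apply $\widetilde{(M.2)}$ to the factor from the $\phi$-estimate,
\[
|\alpha+\beta-\gamma|^{\t|\alpha+\beta-\gamma|^\s}\leq C^{|\alpha|^\s+|\beta-\gamma|^\s+1}\,|\alpha|^{2^{\s-1}\t|\alpha|^\s}\,|\beta-\gamma|^{2^{\s-1}\t|\beta-\gamma|^\s},
\]
so that the factor $|\alpha|^{2^{\s-1}\t|\alpha|^\s}$ cancels the denominator in the coefficient bound on $a_\alpha$ exactly. The product $|\gamma|^{\t|\gamma|^\s}|\beta-\gamma|^{2^{\s-1}\t|\beta-\gamma|^\s}$ is then controlled by $|\beta|^{2^{\s-1}\t|\beta|^\s}$ using $|\gamma|,|\beta-\gamma|\leq|\beta|$ together with $|\gamma|^\s+|\beta-\gamma|^\s\leq|\beta|^\s$ from \eqref{SimpleInequality}. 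The residual $\alpha$-sum takes the shape $\sum_{\alpha}(CL)^{|\alpha|^\s}\cdot(\text{powers of }h_0,h)$, which converges once parameters are chosen properly: in the Beurling case, $L$ is arbitrary and can be taken small for any prescribed $h,h_0$; in the Roumieu case, $h$ and $h_0$ are given while $L$ is free and can be chosen adapted to them. The binomial coefficient $\binom{\beta}{\gamma}\leq 2^{|\beta|}\leq C^{|\beta|^\s}$ and the various exponents of $C$ are absorbed into the final constant $H$. The continuity on $\varinjlim_{\t\to\infty}\E_{\t,\s}(U)$ then follows since $\t 2^{\s-1}$ stays in the same index family.

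Parts (b), (c) and the consequence reduce to comparison of the weights in the norms. For (b), whenever $\s_2>\s_1$ and $|\alpha|\geq 2$, the inequality $|\alpha|^{\t_1|\alpha|^{\s_1}}\leq A\,h^{|\alpha|^{\s_2}}|\alpha|^{\t_2|\alpha|^{\s_2}}$ holds for any prescribed $\t_2,h>0$ and a suitable $A=A(\t_1,\t_2,h)$, since the right-hand side dominates superexponentially; this yields the embedding of seminorm data at each compact set. For (c), $|\alpha|^{\t_1|\alpha|^{\s}}\leq|\alpha|^{\t_2|\alpha|^{\s}}$ gives the claimed chain $\E_{\{\t_1,\s\}}\hookrightarrow\E_{(\t_2,\s)}\hookrightarrow\E_{\{\t_2,\s\}}$ directly from the definitions, and the equalities of the extremal inductive/projective limits follow by interlacing sequences $\t_n\to\infty$ (resp.\ $\t_n\to 0^+$) through the Roumieu and Beurling scales. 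The final consequence is obtained by applying (b) with $\s_1=1$ (using $\E_{t,1}=\E_t$), then embedding the projective limit $\varprojlim_{\t\to 0^+}\E_{\t,\s}(U)$ into any fixed $\E_{\t,\s}(U)$, which in turn sits inside $C^\infty(U)$ tautologically.

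The main obstacle I anticipate is the precise bookkeeping in (a): each invocation of $\widetilde{(M.2)}$ inflates the $\t$-exponent by a factor of $2^{\s-1}$, so the inequality must be used at exactly one well-chosen combination of indices in order to land on the target exponent $\t 2^{\s-1}$ rather than a larger $\t 4^{\s-1}$. The sharper denominator $|\alpha|^{\t 2^{\s-1}|\alpha|^\s}$ appearing in the definition of an ultradifferentiable operator of class $(\t,\s)$ is calibrated precisely to absorb this loss. A secondary difficulty is coordinating the Roumieu/Beurling quantifiers for the operator and the target space simultaneously, since the free parameter $L$ must accommodate both the prescribed input data on $\phi$ and the constants produced by $\widetilde{(M.2)}$; this dictates the specific order in which the quantifiers must be unpacked in either case.
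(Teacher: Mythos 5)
Your proposal takes essentially the same route as the proof the paper relies on: note that the paper states Proposition \ref{TeoremaZatvorenostUltraDfOP} without proof, deferring to \cite{PTT-02}, and the argument there is precisely your Leibniz expansion for part $a)$, with a single application of $\widetilde{(M.2)}$ at the index pair $(|\alpha|,|\beta-\gamma|)$ so that the factor $|\alpha|^{2^{\s-1}\t|\alpha|^{\s}}$ cancels the calibrated denominator $|\alpha|^{\t 2^{\s-1}|\alpha|^{\s}}$ in the coefficient bound, followed by the estimate $|\gamma|^{\t|\gamma|^{\s}}|\beta-\gamma|^{2^{\s-1}\t|\beta-\gamma|^{\s}}\leq |\beta|^{2^{\s-1}\t|\beta|^{\s}}$ via \eqref{SimpleInequality}; parts $b)$, $c)$ and the final consequence are indeed the elementary weight comparisons you describe (in $b)$ and $c)$ you should also absorb the $h$-powers $h_1^{|\alpha|^{\s_1}}$ from the source norm, but the gap $\s_2>\s_1$, resp.\ $\t_2>\t_1$, dominates them in exactly the way you indicate). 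One genuine correction is needed in your quantifier bookkeeping for part $a)$: by the paper's definition of an operator of class $(\t,\s)$, in the \emph{Beurling} case $L$ is existentially quantified (fixed once $K$ is fixed) while $h$ is universal, and it is the \emph{Roumieu} class that grants ``for all $L$''. So your statement that in the Beurling case ``$L$ is arbitrary and can be taken small'' is not available; instead, convergence of the residual sum $\sum_{\alpha}(CL\,h_0^{2^{\s-1}})^{|\alpha|^{\s}}$ must be secured by choosing $h_0$ (and $h$) small, which is legitimate precisely because a Beurling test function satisfies the norm bound \eqref{Norma} for \emph{every} $h_0>0$ and the target seminorm must be checked for every $H>0$. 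With that swap (freedom in $h_0,h$ for Beurling; freedom in $L$ for Roumieu, as you correctly state), your argument closes in both cases.
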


\section*{Acknowledgement}
This research has been supported by the Ministry of Education, Science and
Technological Development through the Projects no. 451-03-68/2020-14/200125 and { 451-03-68/2020-14/200156},
and Project 19.032/961103/19 MNRVOID of the Republic of Srpska.

\vspace*{1cm}

\vspace{-0.5cm}

\end{document}